\documentclass[fleqn,reqno,10pt,a4paper,final]{amsart}

\usepackage[a4paper,left=35mm,right=35mm,top=30mm,bottom=30mm,marginpar=25mm]{geometry} 
\usepackage{amsmath}
\usepackage{amssymb}
\usepackage{amsthm}
\usepackage{amscd}
\usepackage[ansinew]{inputenc}
\usepackage{cite}
\usepackage{bbm}
\usepackage{color}
\usepackage[english=american]{csquotes}
\usepackage[final]{graphicx}
\usepackage{hyperref}
\usepackage{soul}

\linespread{1.3}

\graphicspath{{Pictures/}}

\numberwithin{equation}{section}

\newtheoremstyle{thmlemcorr}{10pt}{10pt}{\itshape}{}{\bfseries}{.}{10pt}{{\thmname{#1}\thmnumber{ #2}\thmnote{ (#3)}}}
\newtheoremstyle{thmlemcorr*}{10pt}{10pt}{\itshape}{}{\bfseries}{.}\newline{{\thmname{#1}\thmnumber{ #2}\thmnote{ (#3)}}}
\newtheoremstyle{defi}{10pt}{10pt}{\itshape}{}{\bfseries}{.}{10pt}{{\thmname{#1}\thmnumber{ #2}\thmnote{ (#3)}}}
\newtheoremstyle{remexample}{10pt}{10pt}{}{}{\bfseries}{.}{10pt}{{\thmname{#1}\thmnumber{ #2}\thmnote{ (#3)}}}
\newtheoremstyle{ass}{10pt}{10pt}{}{}{\bfseries}{.}{10pt}{{\thmname{#1}\thmnumber{ A#2}\thmnote{ (#3)}}}

\theoremstyle{thmlemcorr}
\newtheorem{theorem}{Theorem}
\numberwithin{theorem}{section}
\newtheorem{lemma}[theorem]{Lemma}
\newtheorem{corollary}[theorem]{Corollary}
\newtheorem{proposition}[theorem]{Proposition}

\newtheorem{question}[theorem]{Question}

\theoremstyle{thmlemcorr*}
\newtheorem{theorem*}{Theorem}
\newtheorem{lemma*}[theorem]{Lemma}
\newtheorem{corollary*}[theorem]{Corollary}
\newtheorem{proposition*}[theorem]{Proposition}
\newtheorem{problem*}[theorem]{Problem}
\newtheorem{conjecture*}[theorem]{Conjecture}

\theoremstyle{defi}
\newtheorem{definition}[theorem]{Definition}

\theoremstyle{remexample}
\newtheorem{remark}[theorem]{Remark}

\theoremstyle{ass}
\newtheorem{assumption}{Assumption}

\newcommand{\Ecal}{\mathcal{E}}

\newcommand{\Hcal}{\mathcal{H}}

\newcommand{\Vcal}{\mathcal{V}}

\newcommand{\Xcal}{\mathcal{X}}
\newcommand{\Ycal}{\mathcal{Y}}

\newcommand{\Sbb}{\mathbb{S}}

\DeclareMathOperator{\Var}{Var}

\newcommand{\norm}[1]{\|#1\|}

\newcommand{\abs}[1]{|#1|}

\newcommand{\di}{\mathrm{d}}

\newcommand{\N}{\mathbb{N}}
\newcommand{\R}{\mathbb{R}}

 
\def\XXint#1#2#3{{\setbox0=\hbox{$#1{#2#3}{\int}$} 
\vcenter{\hbox{$#2#3$}}\kern-.5\wd0}}


\newcommand{\p}{\partial}

\renewcommand{\phi}{\varphi}


\newcommand{\pa}[1]{\left( #1 \right)}
\newcommand{\rpa}[1]{\left[ #1 \right]}
\newcommand{\br}[1]{\left\lbrace #1\right\rbrace}

\title{Weak Poincar\'e  inequalities  in the absence of spectral gaps}

\author{Jonathan Ben-Artzi}

\address{School of  Mathematics, Cardiff University, Cardiff CF24 4AG, Wales, United Kingdom.}
\email{Ben-ArtziJ@cardiff.ac.uk}

\author{Amit Einav}

\address{Institut f\"ur Analysis und Scientific Computing,
Technische Universit\"at Wien, Austria.}
\email{amit.einav@asc.tuwien.ac.at}




\begin{document}
\maketitle


\begin{abstract}
For generators of Markov semigroups which lack a spectral gap, it is shown how bounds on the density of states  near zero lead to  a so-called ``weak Poincar\'e inequality'' (WPI), originally introduced by Liggett [Ann. Probab., 1991]. Applications to {general classes of constant coefficient pseudodifferential operators are studied. Particular examples are }the heat semigroup and   the semigroup generated by the fractional Laplacian {in the whole space}, where the optimal decay rates are recovered. {Moreover, }the classical Nash inequality appears as a special case of the WPI for the heat semigroup.

\vspace{4pt}

\noindent\textsc{MSC (2010): 39B62 (primary); 37A30, 35J05, 47D07}

\noindent\textsc{Keywords:} Weak Poincar\'e inequalities, Density of states, Markov semigroups, rates of decay, entropy method.

\vspace{4pt}

\noindent\textsc{Date:} \today.

\vspace{4pt}

\noindent\textsc{Acknowledgements:} The first author was support by the UK Engineering and Physical Sciences Research Council (EPSRC) grant EP/N020154/1. The second author was supported by the Austrian Science Fund (FWF) grant M 2104-N32. {The authors thank the referees for their useful comments which helped improve the content and presentation of this work.}
\end{abstract}

\setcounter{tocdepth}{1} 
\tableofcontents

\section{Introduction and statement of results}\label{sec:intro}
In this note we study how the well-known equivalence between spectral gaps, Poincar\'e inequalities and exponential rates of decay to equilibrium extends to systems which lack a spectral gap but have a bounded density of states near $0$. Our main result relies solely on our ability to ``differentiate'' the resolution of the identity of a given operator. It is thus  quite general, and covers important examples such as Markov semigroups.

Our setup is as follows: we let $M$ be a manifold with Borel measure $\di\mu$, $\Hcal=L^2(M,\di\mu;\R)$ equipped with scalar product $(\cdot,\cdot)_\Hcal$. We assume that $H:D(H)\subset\Hcal\to\Hcal$ is a self-adjoint, non-negative operator, so that $-H$ is the infinitesimal generator of a Markov semigroup $(P_t)_{t\geq0}$, whose invariant measure is $\di\mu$, i.e. for every $u$ that is bounded and non-negative  $\int_M P_tu\,\di\mu=\int_M u\,\di\mu$ for any $t\geq0.$
 Let $\{E(\lambda)\}_{\lambda\geq0}$ be the resolution of the identity of $H$ and let the associated Dirichlet form be 
\[\Ecal(u):=\int_{M}(H^{1/2}u)^2\,\di\mu.\]

As stated above, instead of assuming a spectral gap, we assume the opposite: $H$ has continuous spectrum in a neighborhood of $0$ (and $0$ itself is possibly an eigenvalue). We show that an appropriate estimate of the density of the spectrum near $0$ leads to a weaker version of the Poincar\'e inequality (also known as a weak Poincar\'e inequality, defined below in Definition  \ref{def:wpi}). This, in turn, leads to an algebraic decay rate for the associated semigroup.

In this paper we employ the following definition for the variance  of a given function $u\in\Hcal$:
 \begin{equation*}
 \Var(u):=\int_M(u-E(\{0\})u)^2\,\di\mu
 \end{equation*}
 where $E(\{0\})$ is the projection onto the kernel of $H$.  In the case where the kernel only consists of constant functions and $\mu$ is a probability measure, this definition coincides with the standard definition, see \cite[\S4.2.1]{Bakry2014}. We discuss the significance of the resolution of the identity of $H$ (and in particular the projection onto its kernel) and its relationship with functional inequalities and decay rates below  in Section \ref{sec:weak}.
 
%
%
%
%
%
%
%
%
%

 We can now  recall  the classical Poincar\'e inequality (again, see  \cite[\S4.2.1]{Bakry2014}):
 \begin{definition}[Poincar\'e Inequality]\label{def:poincare}
We say that  $H$ satisfies a \emph{Poincar\'e inequality} if there exists $C>0$ such that
\begin{equation*}\label{eq:poincare_general_H}
\Var(u) \leq C \Ecal(u),\qquad \forall u\in D(\Ecal),
\end{equation*}
where $C$ does not depend on $u$.
\end{definition}

\begin{remark}
The topology of $D(\Ecal)$ is the graph norm  topology generated by $\|\cdot\|_\Hcal^2+\Ecal(\cdot)$, see \cite[\S3.1.4]{Bakry2014}.
\end{remark}

The definition of  a ``weak Poincar\'e inequality'' is somewhat ambiguous. This is  addressed  in further detail  in Section \ref{sec:weak} below. We adopt the following  definition, motivated by Liggett \cite[Equation (2.3)]{Liggett1991}:

\begin{definition}[Weak Poincar\'e Inequality]\label{def:wpi}
Let $\Phi:\Hcal\to[0,\infty]$ satisfy $\Phi(u)<\infty$ on a dense subset of $D(\Ecal)$. Let $p\in(1,\infty)$. We say that  $H$ satisfies a \emph{$(\Phi,{p})$-weak Poincar\'e inequality ($(\Phi,{p})$-WPI)}  if there exists $C>0$ such that
\begin{equation}\label{eq:wpi}
\Var(u)\leq C\Ecal(u)^{1/p} \Phi(u)^{1/q},\qquad\forall u\in D(\Ecal),
\end{equation}
where $C$ does not depend on $u$ and where $1/p+1/q=1$.
\end{definition}

\begin{remark}
Note that \eqref{eq:wpi} is meaningful only  on a dense subset of $D(\Ecal)$ where $\Phi<+\infty$.
\end{remark}

\subsection{The Hilbertian case}
{We start our discussion by considering the purely Hilbertian case, i.e. we consider generators with density of states that are defined on} subspaces which respect the Hilbert structure of $\Hcal$, such as Sobolev spaces or weighted spaces. Our basic assumption is:
\begin{assumption}\label{ass:1}
There exists a dense subspace $\Xcal\subset\Hcal$ such that
\begin{enumerate}
\item $\Xcal\cap D(\Ecal)$ is dense in $D(\Ecal)$  (in the topology of $D(\Ecal)$),
\item for some constants $r>0$, $C_1>0$ and $\alpha>-1$,
\end{enumerate}
 the mapping  $\lambda\mapsto\frac{\di}{\di\lambda}(E(\lambda)u,v)_{\Hcal}$ is continuous on $(0,r)$ for every $u,v\in\Xcal$ and satisfies
	\begin{equation}\label{eq:dens-states}
	\left|\frac{\di}{\di\lambda}(E(\lambda)u,v)_{\Hcal}\right|\leq C_1\lambda^\alpha\|u\|_\Xcal\|v\|_\Xcal,\qquad\forall u,v\in\Xcal,\,\forall \lambda\in(0,r).
	\end{equation}	
\end{assumption}
\begin{remark}\label{rem:density_of_states_notion}
We refer to the bilinear form  $\frac{\di}{\di\lambda}(E(\lambda)\cdot,\cdot)_{\Hcal}$ as the density of states (DoS) of $H$ at $\lambda$. Note that  if the DoS satisfies a bound as in \eqref{eq:dens-states} and $\Xcal$ has a norm compatible with (and stronger than) the norm on $\Hcal$ then  it induces an operator $\Xcal\to\Xcal^*$ by the Riesz representation theorem.
\end{remark}

We can finally state our main results on how \eqref{eq:dens-states} leads to a $(\Phi,p)$-WPI (Theorem \ref{thm:main}) and, in turn, an explicit rate of decay (Theorem \ref{thm:main1}). Theorem \ref{thm:main} will be further generalized below in Theorem \ref{thm:main2}{, and then again in Proposition \ref{prop:main2_improved} where  a precise constant in the WPI is obtained}. The decay rates presented in Theorem \ref{thm:main1} apply to the Markov semigroup generated by $H$.
\begin{theorem}\label{thm:main}
If Assumption A\ref{ass:1} holds  then $H$ satisfies a $(\Phi,p)$-weak Poincar\'e inequality with $\Phi(u)=\|u\|_{\Xcal}^2$ (and $\Phi(u)=+\infty$ if $u\in\Hcal\setminus\Xcal$) and $p=\frac{2+\alpha}{1+\alpha}$. 
\end{theorem}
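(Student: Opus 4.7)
The plan is to use the spectral decomposition of $H$ to split $\Var(u)$ at some spectral threshold $R$, handling the low-frequency piece via the density-of-states estimate \eqref{eq:dens-states} and the high-frequency piece via the Dirichlet form, and then to optimize $R$.

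Since $\Phi(u) = +\infty$ on $\Hcal \setminus \Xcal$, the inequality \eqref{eq:wpi} is trivial there, so it is enough to establish it for $u \in \Xcal \cap D(\Ecal)$ (the density hypothesis Assumption A\ref{ass:1}(1) is used only to place the conclusion on a dense subset of $D(\Ecal)$, as required by Definition \ref{def:wpi}). For such $u$, since $u - E(\{0\}) u$ has spectral support in $(0, \infty)$, the spectral theorem yields
$$\Var(u) = \int_{(0, \infty)} d\|E(\lambda) u\|_\Hcal^2, \qquad \Ecal(u) = \int_{(0, \infty)} \lambda \, d\|E(\lambda) u\|_\Hcal^2.$$
For a parameter $R \in (0, r)$ to be chosen, I split the $\Var$ integral at $\lambda = R$. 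On $[R, \infty)$ the elementary bound $1 \leq \lambda/R$ gives
$$\int_{[R, \infty)} d\|E(\lambda) u\|_\Hcal^2 \leq \frac{1}{R} \Ecal(u).$$
On $(0, R)$, Assumption A\ref{ass:1}(2) applied with $v = u$ tells us that $\lambda \mapsto \|E(\lambda)u\|_\Hcal^2$ is $C^1$ there with derivative bounded by $C_1 \lambda^\alpha \|u\|_\Xcal^2$, and integrating (which is where $\alpha > -1$ is needed, to ensure integrability at $0$) yields
$$\int_{(0, R)} d\|E(\lambda) u\|_\Hcal^2 \leq \frac{C_1}{1+\alpha} R^{1+\alpha} \|u\|_\Xcal^2.$$

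Adding the two pieces,
$$\Var(u) \leq \frac{C_1}{1+\alpha} R^{1+\alpha} \|u\|_\Xcal^2 + \frac{1}{R} \Ecal(u) \qquad \text{for every } R \in (0, r).$$
The two terms balance at $R^\ast \sim \bigl(\Ecal(u)/\|u\|_\Xcal^2\bigr)^{1/(2+\alpha)}$, and substituting this (when admissible) gives
$$\Var(u) \leq C \, \Ecal(u)^{(1+\alpha)/(2+\alpha)} \|u\|_\Xcal^{2/(2+\alpha)}.$$
Setting $p = (2+\alpha)/(1+\alpha)$, one checks that $1/p = (1+\alpha)/(2+\alpha)$ and $1/q = 1/(2+\alpha)$, so this is precisely the claimed $(\Phi, p)$-WPI with $\Phi(u) = \|u\|_\Xcal^2$.

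The one point I expect to need extra care with is the range of admissible $R$: \eqref{eq:dens-states} is assumed only on $(0, r)$, so the balanced $R^\ast$ is usable only when it lies in that interval, i.e. when $\Ecal(u)$ is not too large relative to $\|u\|_\Xcal^2$. In the opposite regime one is forced to take $R = r$ and obtains $\Var(u) \lesssim \Ecal(u)$; this must then be recast into the form $\Ecal(u)^{1/p} \|u\|_\Xcal^{2/q}$, presumably by invoking the continuous embedding $\Xcal \embed \Hcal$ implicit in the setting (cf.\ Remark \ref{rem:density_of_states_notion}) to trade a factor of $\|u\|_\Xcal^{2/p}$ against $\Ecal(u)^{1/p}$ in that regime, at the cost of a possibly larger constant.
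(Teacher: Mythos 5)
Your argument is correct and is essentially the same approach as the paper: the proof of Theorem \ref{thm:main2}a establishes $\Ecal(u)\geq r_0\bigl(\Var(u)-\Psi(r_0)\|u\|_\Xcal\|u\|_\Ycal\bigr)$ for a spectral threshold $r_0\in(0,r)$, which is exactly your inequality $\Var(u)\leq\Psi(R)\|u\|_\Xcal^2+R^{-1}\Ecal(u)$ rearranged (with $\Xcal=\Ycal$ and $\psi(\lambda)=C_1\lambda^\alpha$), and Lemma \ref{lem:thm2-to-thm1} then performs the same power-law optimization in the threshold. The paper likewise flags the admissibility constraint (``to satisfy the condition $r_0<r$ we may need $K$ to be small''), so your closing caveat about $R^\ast$ possibly exceeding $r$, and the fallback using $\Xcal\embed\Hcal$ to absorb that regime, is the right way to close the gap.
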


\begin{theorem}\label{thm:main1}
Let Assumption A\ref{ass:1} hold. Let $u\in\Xcal$ and suppose that there exist $C_2=C_2(u)\geq0$ and $\beta\in \R$, such that the Markov semigroup satisfies
	\begin{equation}\label{eq:semigroup-decay-ass}
	\norm{P_tu}^2_{\Xcal}\leq \norm{u}^2_{\Xcal}+C_2t^\beta,\qquad \forall t\geq0.
	\end{equation}
Then
	\begin{equation}\label{eq:bound-variance}
	\Var(P_t u)
	\leq
	\left(\Var(u) ^{\frac{-1}{1+\alpha}}+C_3
	\int_{0}^t {({\norm{u}^2_{\Xcal}+C_2s^\beta})^{\frac{-1}{1+	\alpha}}{\,\di s}}\right)^{-({1+\alpha})}
	\end{equation}
where $C_3$ is given explicitly (and only depends on $\alpha,\, C_1$). In particular, $\Var(P_t u)$ satisfies the following decay rates as $t\to+\infty$:
	\[
	\Var({P_t u})
	\leq
	\begin{cases} 
	O({(\log t) ^{-(1+\alpha)}}) & \beta=1+\alpha.\\
	O({t^{\beta-(1+\alpha)}}) & 0<\beta <1+\alpha.\\
	O({t^{-(1+\alpha)}}) & C_2=0 \text{ or }\beta\leq 0.
	\end{cases}
	\]
\end{theorem}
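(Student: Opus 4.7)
The plan is to convert the $(\Phi,p)$-weak Poincar\'e inequality of Theorem \ref{thm:main} into a differential inequality for $V(t):=\Var(P_t u)$ and then integrate it. First I would record the standard variance-dissipation identity
\[
\frac{\di}{\di t}\Var(P_t u)=-2\Ecal(P_t u),
\]
which follows by writing $\Var(P_t u)=\|(I-E(\{0\}))P_t u\|_\Hcal^2$, observing that $E(\{0\})$ commutes with $P_t$, and differentiating with $\partial_t P_t=-HP_t$ together with the definition of $\Ecal$.

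Next I would apply Theorem \ref{thm:main} pointwise in $t$ to $v=P_t u$, which is legitimate because the hypothesis \eqref{eq:semigroup-decay-ass} guarantees $P_t u\in\Xcal$ for every $t\geq 0$, taking $\Phi(v)=\|v\|_\Xcal^2$, $p=(2+\alpha)/(1+\alpha)$, $q=2+\alpha$. Solving the resulting WPI for $\Ecal(P_t u)$, inserting into the dissipation identity, and then using \eqref{eq:semigroup-decay-ass} to bound $\|P_t u\|_\Xcal$ from above yields
\[
V'(t)\leq -c\,V(t)^{(2+\alpha)/(1+\alpha)}\,(\|u\|_\Xcal^2+C_2 t^\beta)^{-1/(1+\alpha)}
\]
for a constant $c=c(\alpha,C_1)$ that is explicit in $C_1$ and $\alpha$. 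Dividing by $V(t)^{(2+\alpha)/(1+\alpha)}$ and recognising that the left-hand side is, up to a positive multiplicative constant, the time derivative of $V(t)^{-1/(1+\alpha)}$, I obtain a pointwise lower bound on $\frac{\di}{\di t}V(t)^{-1/(1+\alpha)}$. Integrating from $0$ to $t$ produces exactly the bound \eqref{eq:bound-variance}, with $C_3$ the explicit constant $c/(1+\alpha)$.

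The three asymptotic regimes then follow from a routine analysis of the integral $I(t):=\int_0^t(\|u\|_\Xcal^2+C_2 s^\beta)^{-1/(1+\alpha)}\,\di s$. If $C_2=0$ or $\beta\leq 0$, the integrand is bounded below by a positive constant, hence $I(t)\gtrsim t$ and \eqref{eq:bound-variance} yields $V(t)=O(t^{-(1+\alpha)})$. If $0<\beta<1+\alpha$, the integrand decays like $s^{-\beta/(1+\alpha)}$ with exponent strictly less than one, so $I(t)\gtrsim t^{1-\beta/(1+\alpha)}$ and $V(t)=O(t^{\beta-(1+\alpha)})$. At the critical value $\beta=1+\alpha$, the integrand is $\sim s^{-1}$ for large $s$, giving $I(t)\gtrsim \log t$ and therefore $V(t)=O((\log t)^{-(1+\alpha)})$.

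The only mildly delicate point is verifying that $t\mapsto\Var(P_t u)$ is absolutely continuous with the claimed derivative along the whole semigroup trajectory, and that Theorem \ref{thm:main} may indeed be invoked at each $t$; both reduce to the smoothing and self-adjointness of $(P_t)_{t\geq 0}$ and do not carry new content. The substance of the argument is the WPI of Theorem \ref{thm:main}; the present theorem is essentially an ODE comparison argument downstream of it.
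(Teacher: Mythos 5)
Your proposal is correct and follows essentially the same route as the paper's proof: the variance-dissipation identity, the WPI of Theorem \ref{thm:main} applied along the trajectory combined with \eqref{eq:semigroup-decay-ass} to produce the ODE inequality $V'(t)\leq -cV(t)^{\frac{2+\alpha}{1+\alpha}}(\|u\|_\Xcal^2+C_2t^\beta)^{-\frac{1}{1+\alpha}}$, integration via the substitution $V\mapsto V^{-1/(1+\alpha)}$, and the three-case asymptotic analysis of the resulting integral. The paper itself flags this argument as ``rather standard and included for completeness,'' and your write-up matches it step for step, including the identification $C_3=c/(1+\alpha)$.
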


 \begin{remark}
1. The choice of space $\Xcal$ is motivated by \eqref{eq:semigroup-decay-ass}: it is beneficial to choose $\Xcal$ that is invariant under the Markov semigroup {(i.e., if $u\in\Xcal$ then $P_tu\in\Xcal$ for all $t\geq0$)}.

2. Clearly $C_2(u)$ is subject to quadratic scaling, for example it can be $C\|u\|_\Hcal^2$ or $C\|u\|_\Xcal^2$, but the explicit form is not important.
 \end{remark}

\subsection{A generalized theorem: departing from the Hilbert structure}
Theorems \ref{thm:main} and \ref{thm:main1} demonstrate  how estimates on the density of states near $0$ imply a weak Poincar\'e inequality and a rate of decay to equilibrium. However it is not essential to restrict oneself to a  subspace $\Xcal$. In fact, it is often desirable to deal with functional spaces that are not contained in $\Hcal$, as it may provide improved estimates and decay rates. In particular, this makes sense when the operator in question is the generator of a Markov semigroup, and acts on a range of spaces simultaneously. Hence we replace Assumption A\ref{ass:1} by a more general one:
\begin{assumption}\label{ass:2}
There exist  Banach spaces $\Xcal,\Ycal$ of functions on $M$, a  constant $r>0$ and a function $\psi_{\Xcal,\Ycal}\in L^1(0,r)$ that is strictly positive a.e. on $(0,r)$,  such that

\begin{enumerate}
\item $\Xcal\cap\Ycal\cap D(\Ecal)$ is dense in $D(\Ecal)$ (in the topology of $D(\Ecal)$).

\item The mapping  $\lambda\mapsto\frac{\di}{\di\lambda}(E(\lambda)u,v)_{\Hcal}$ is continuous on $(0,r)$ for every $u\in\Xcal\cap\Hcal$ and $v\in\Ycal\cap\Hcal$ and satisfies
	\begin{equation}\label{eq:dens-states2}
	\left|\frac{\di}{\di\lambda}(E(\lambda)u,v)_{\Hcal}\right|\leq  \psi_{\Xcal,\Ycal}(\lambda)\|u\|_\Xcal\|v\|_\Ycal,\qquad\forall \lambda\in(0,r).
	\end{equation}	
\end{enumerate}
\end{assumption}

We can now state the following more general theorem:

\begin{theorem}\label{thm:main2}
Let the conditions of  Assumption A\ref{ass:2} hold, and define $\Psi_{\Xcal,\Ycal}(\rho)=\int_0^\rho\psi_{\Xcal,\Ycal}(\lambda)\,\di\lambda$, $\rho\in(0,r)$. Then:

a.  There exists $K_0\in(0,1)$ such that the following functional inequality holds:
	\begin{equation}\label{eq:wpi-implicit}
	(1-K)\Psi_{\Xcal,\Ycal}^{-1}\left(K\frac{\Var(u)}{\|u\|_\Xcal\|u\|_\Ycal}\right)\Var(u)\leq\Ecal(u),\qquad\forall K\in(0,K_0),\,\forall u\in D(\Ecal)
	\end{equation}
where $\|u\|_\Xcal=+\infty$ if $u\notin \Xcal$ and similarly for $\Ycal$.

b. If $\Xcal=\Ycal$ and $\psi_{\Xcal,\Ycal}(\lambda)=C_1\lambda^\alpha$, $\alpha>-1$, the estimate \eqref{eq:wpi-implicit} reduces to the $(\Phi,p)$-WPI as in Definition \ref{def:wpi} with $\Phi(u)=\|u\|_\Xcal^2$ and $p=\frac{\alpha+2}{\alpha+1}$.

c. If, in addition, $\Xcal=\Ycal\subset\Hcal$ then we obtain Theorem \ref{thm:main}.
\end{theorem}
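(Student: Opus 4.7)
The plan is to spectrally decompose $\Var(u)=\int_{(0,\infty)}\di(E(\lambda)u,u)_{\Hcal}$, split the integral at a threshold $\rho\in(0,r)$, estimate the low- and high-energy pieces separately, and then optimize in $\rho$. For the high-energy piece a Chebyshev-type estimate yields
\[
\int_{(\rho,\infty)}\di(E(\lambda)u,u)_{\Hcal}\leq\frac{1}{\rho}\int_{(\rho,\infty)}\lambda\,\di(E(\lambda)u,u)_{\Hcal}\leq\frac{1}{\rho}\Ecal(u),
\]
while the continuity of the density of states on $(0,r)$ together with \eqref{eq:dens-states2} applied at $v=u$ controls the low-energy piece:
\[
\int_{(0,\rho]}\di(E(\lambda)u,u)_{\Hcal}=\int_0^\rho\frac{\di}{\di\lambda}(E(\lambda)u,u)_{\Hcal}\,\di\lambda\leq\Psi_{\Xcal,\Ycal}(\rho)\,\|u\|_\Xcal\|u\|_\Ycal.
\]
Summing the two estimates gives the key preliminary bound $\Var(u)\leq\Psi_{\Xcal,\Ycal}(\rho)\|u\|_\Xcal\|u\|_\Ycal+\rho^{-1}\Ecal(u)$ for every $\rho\in(0,r)$.

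To prove (a) I would optimize the preliminary bound by choosing $\rho$ so that the first term on the right equals $K\,\Var(u)$ for a parameter $K\in(0,1)$, that is $\rho=\Psi_{\Xcal,\Ycal}^{-1}\bigl(K\,\Var(u)/(\|u\|_\Xcal\|u\|_\Ycal)\bigr)$. Substituting back yields $(1-K)\Var(u)\leq\rho^{-1}\Ecal(u)$, which, after multiplying through by $\rho$, is exactly \eqref{eq:wpi-implicit}. The constraint $K<K_0$ enters only to ensure that the argument of $\Psi_{\Xcal,\Ycal}^{-1}$ remains in its domain of invertibility $(0,\Psi_{\Xcal,\Ycal}(r))$; extracting a universal $K_0\in(0,1)$ reduces to a uniform upper bound on the ratio $\Var(u)/(\|u\|_\Xcal\|u\|_\Ycal)$, which in turn follows from Cauchy--Schwarz ($\Var(u)\leq\|u\|_\Hcal^2$) combined with the continuous embeddings of $\Xcal\cap\Hcal$ and $\Ycal\cap\Hcal$ into $\Hcal$ implicit in \eqref{eq:dens-states2}. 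For $u$ outside $\Xcal\cap\Ycal$ the right-hand side of \eqref{eq:wpi-implicit} vanishes (since $\Psi_{\Xcal,\Ycal}^{-1}(0)=0$) and the inequality is trivially true, and condition (1) of Assumption \assref{ass:2} then propagates \eqref{eq:wpi-implicit} to all of $D(\Ecal)$ by density.

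Parts (b) and (c) are direct computations. For (b), $\psi_{\Xcal,\Ycal}(\lambda)=C_1\lambda^\alpha$ gives $\Psi_{\Xcal,\Ycal}(\rho)=\frac{C_1}{\alpha+1}\rho^{\alpha+1}$ and hence $\Psi_{\Xcal,\Ycal}^{-1}(y)=\bigl(\tfrac{(\alpha+1)y}{C_1}\bigr)^{1/(\alpha+1)}$; plugging into \eqref{eq:wpi-implicit} with $\Xcal=\Ycal$ and isolating $\Var(u)$ produces
\[
\Var(u)^{\frac{\alpha+2}{\alpha+1}}\leq C\,\Ecal(u)\,\|u\|_{\Xcal}^{\frac{2}{\alpha+1}},
\]
which is precisely Definition \ref{def:wpi} with $\Phi(u)=\|u\|_\Xcal^2$ and $p=\frac{\alpha+2}{\alpha+1}$. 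For (c), the further inclusion $\Xcal\subset\Hcal$ collapses Assumption \assref{ass:2} to Assumption \assref{ass:1}, so the conclusion reduces verbatim to Theorem \ref{thm:main}. The main obstacle I anticipate is the clean extraction of $K_0$ in (a): it requires careful control of the interplay between the Hilbert norm and the Banach norms on $\Xcal$ and $\Ycal$, uniformly in $u$, in order that one universal $K_0\in(0,1)$ serves for every $u\in D(\Ecal)$.
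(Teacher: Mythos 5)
Your decomposition is the same as the paper's: split the spectral integral at a threshold $\rho\in(0,r)$, bound the high-energy piece by $\rho^{-1}\Ecal(u)$ (Chebyshev), bound the low-energy piece by $\Psi_{\Xcal,\Ycal}(\rho)\|u\|_\Xcal\|u\|_\Ycal$ via \eqref{eq:dens-states2}, and then pick $\rho$ so that the low-energy contribution equals $K\Var(u)$. The paper writes this as a lower bound on $\Ecal(u)$ rather than an upper bound on $\Var(u)$, but after rearrangement the two are identical, and your parts (b) and (c) are computed correctly and agree with the paper.

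The one genuine gap is in how you propose to extract a universal $K_0$. You claim a uniform bound on $\Var(u)/(\|u\|_\Xcal\|u\|_\Ycal)$ via Cauchy--Schwarz together with ``the continuous embeddings of $\Xcal\cap\Hcal$ and $\Ycal\cap\Hcal$ into $\Hcal$ implicit in \eqref{eq:dens-states2}.'' No such embedding is implicit or assumed: Assumption \assref{ass:2} relates the density of states to $\|\cdot\|_\Xcal\|\cdot\|_\Ycal$ but says nothing comparing these norms to $\|\cdot\|_\Hcal$, and in the paper's central example $\Xcal=\Ycal=L^1(\R^d)$, $\Hcal=L^2(\R^d)$, the ratio $\|u\|_{L^2}^2/\|u\|_{L^1}^2$ is unbounded (concentrate $u$ near a point), so no uniform bound exists and your route to $K_0$ fails. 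The correct fix is the convention of Remark \ref{rem:genera_psi}, $\Psi_{\Xcal,\Ycal}^{-1}(y)=\sup\{x\in(0,r):\Psi_{\Xcal,\Ycal}(x)\leq y\}$, so that $\Psi_{\Xcal,\Ycal}^{-1}(y)=r$ whenever $y\geq\Psi_{\Xcal,\Ycal}(r^-)$. In that regime your preliminary bound $\Var(u)\leq\Psi_{\Xcal,\Ycal}(\rho)\|u\|_\Xcal\|u\|_\Ycal+\rho^{-1}\Ecal(u)$ combined with $\Psi_{\Xcal,\Ycal}(\rho)\|u\|_\Xcal\|u\|_\Ycal\leq\Psi_{\Xcal,\Ycal}(r)\|u\|_\Xcal\|u\|_\Ycal\leq K\Var(u)$ gives $(1-K)\Var(u)\leq\rho^{-1}\Ecal(u)$ for every $\rho<r$, hence $(1-K)r\,\Var(u)\leq\Ecal(u)$, which is \eqref{eq:wpi-implicit} under the extended convention; with this in place the inequality holds for every $K\in(0,1)$ with no a priori bound on the ratio. (Your density step for $u\notin\Xcal\cap\Ycal$ is also redundant: once the left side of \eqref{eq:wpi-implicit} is $0$ for such $u$ the inequality already holds for all of $D(\Ecal)$, so no approximation argument is needed.)
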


\begin{remark}
The inequality \eqref{eq:wpi-implicit} can be viewed as an implicit form of the weak Poincar\'e inequality. Note that setting $K=0$ (which is excluded in the theorem) leads to the Poincar\'e inequality.
\end{remark}

The power of this result is demonstrated in the following corollary, where the celebrated Nash inequality is obtained as a simple consequence. This simple derivation is discussed  in Remark \ref{rek:nash} below.
\begin{corollary}[Nash inequality]\label{cor:nash}
When $H=-\Delta:H^2(\R^d)\subset L^2(\R^d)\to L^2(\R^d)$ and $\Ycal=\Xcal=L^1(\R^d)$ the  inequality \eqref{eq:wpi-implicit} is precisely Nash's inequality  \cite{Nash1958}:
	\begin{equation*}\label{eq:nash}
	\|u\|_{L^2}^2
	\leq
	C
	\left(\|\nabla u\|_{L^2}^2\right)^{\frac{d}{d+2}}
	\left(\|u\|_{L^1}^2\right)^{\frac{2}{d+2}},\qquad\forall u\in L^1(\R^d)\cap H^1(\R^d),
	\end{equation*}
where $C>0$ does not depend on $u$.  {Furthermore, using Proposition \ref{prop:main2_improved} (below)  an explicit constant may be computed to yield $C=\pa{\frac{\abs{\Sbb^{d-1}}}{2}}^{\frac{2}{2+d}}\frac{2+d}{d}$.}
\end{corollary}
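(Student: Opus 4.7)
The plan is to apply Theorem \ref{thm:main2}(b) directly, with $H=-\Delta$ and $\Xcal=\Ycal=L^1(\R^d)$. The only work is to verify Assumption A\ref{ass:2} by identifying the density of states of $-\Delta$, viewed as a bilinear form on $L^1 \times L^1$, as a power of $\lambda$. Density of $L^1(\R^d)\cap H^1(\R^d)$ in $D(\Ecal)=H^1(\R^d)$ is standard (the Schwartz class suffices).

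The Laplacian is diagonalised by the Fourier transform, so its resolution of identity is
\[
(E(\lambda)u,v)_{\Hcal} = \int_{\{|\xi|^2 \leq \lambda\}} \hat{u}(\xi)\,\overline{\hat{v}(\xi)}\,\di\xi.
\]
Passing to polar coordinates $\xi=s\omega$ and substituting $t=s^2$ gives
\[
(E(\lambda)u,v)_{\Hcal} = \int_0^\lambda \frac{t^{(d-2)/2}}{2}\int_{\Sbb^{d-1}} \hat{u}(\sqrt{t}\,\omega)\overline{\hat{v}(\sqrt{t}\,\omega)}\,\di\omega\,\di t,
\]
so that the DoS is
\[
\frac{\di}{\di\lambda}(E(\lambda)u,v)_{\Hcal}=\frac{\lambda^{(d-2)/2}}{2}\int_{\Sbb^{d-1}} \hat{u}(\sqrt{\lambda}\,\omega)\overline{\hat{v}(\sqrt{\lambda}\,\omega)}\,\di\omega.
\]
Combining with the pointwise bound $\|\hat{u}\|_{L^\infty}\leq\|u\|_{L^1}$ (for a suitable Fourier normalisation) yields
\[
\left|\frac{\di}{\di\lambda}(E(\lambda)u,v)_{\Hcal}\right|\leq \frac{\abs{\Sbb^{d-1}}}{2}\,\lambda^{(d-2)/2}\|u\|_{L^1}\|v\|_{L^1},
\]
which is precisely \eqref{eq:dens-states2} with $\psi_{\Xcal,\Ycal}(\lambda)=C_1\lambda^\alpha$, $\alpha=(d-2)/2>-1$, $C_1=\abs{\Sbb^{d-1}}/2$.

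Theorem \ref{thm:main2}(b) then yields the $(\Phi,p)$-WPI with $\Phi(u)=\|u\|_{L^1}^2$ and exponent $p=(\alpha+2)/(\alpha+1)=(d+2)/d$. Since $-\Delta$ has trivial $L^2$-kernel on $\R^d$, the projection $E(\{0\})$ is zero, so $\Var(u)=\|u\|_{L^2}^2$; likewise $\Ecal(u)=\|\nabla u\|_{L^2}^2$. Substituting these identifications directly produces Nash's inequality $\|u\|_{L^2}^2\leq C(\|\nabla u\|_{L^2}^2)^{d/(d+2)}(\|u\|_{L^1}^2)^{2/(d+2)}$. The explicit value of $C$ then comes from applying Proposition \ref{prop:main2_improved} rather than Theorem \ref{thm:main2}(b): the former tracks precisely how the WPI constant depends on $C_1$ and $\alpha$ (by optimising the free parameter $K\in(0,K_0)$ in the implicit bound \eqref{eq:wpi-implicit}). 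Plugging in $C_1=\abs{\Sbb^{d-1}}/2$ and $\alpha=(d-2)/2$ and simplifying gives the stated $C=(\abs{\Sbb^{d-1}}/2)^{2/(d+2)}(d+2)/d$.

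The only genuinely non-trivial step is this last constant computation, which depends entirely on the explicit form of Proposition \ref{prop:main2_improved}; everything else is bookkeeping. A minor subtlety is the choice of Fourier convention, which must be the one giving both $\|\hat{u}\|_\infty\leq\|u\|_1$ and the clean eigenvalue $|\xi|^2$ for $-\Delta$; any resulting $(2\pi)^d$ factors can be absorbed into the normalisation of the spectral measure and do not affect the final exponent structure of Nash's inequality.
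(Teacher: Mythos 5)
Your proposal is correct and follows essentially the same route as the paper: compute the density of states of $-\Delta$ via the Fourier transform, bound $\|\widehat u\|_\infty \le \|u\|_{L^1}$ to get $\psi(\lambda)=C_1\lambda^{(d-2)/2}$, and invoke Theorem~\ref{thm:main2}b (resp.\ Proposition~\ref{prop:main2_improved}) for the inequality and the constant. The only cosmetic difference is that you carry out the polar-coordinate substitution $t=s^2$ directly for $P(\xi)=|\xi|^2$, whereas the paper phrases the computation via the coarea formula within the general pseudodifferential framework of Assumption~A\ref{ass:pdo}; for the Laplacian these are identical since $|\nabla P(\xi)|=2|\xi|=2\sqrt{\lambda}$ on the level set. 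Your remark about the Fourier convention is slightly understated: the normalisation affects the numerical value of $C_1$, and hence of the advertised constant $C$, not merely bookkeeping that can be ``absorbed.'' Indeed, to obtain both $\|\widehat u\|_\infty\le\|u\|_{L^1}$ and $(E(\lambda)u,v)_{L^2}=\int_{|\xi|^2\le\lambda}\widehat u\,\overline{\widehat v}\,\di\xi$ with the same transform forces a $(2\pi)^d$ somewhere, and a careful track gives $C_1=\frac{|\Sbb^{d-1}|}{2(2\pi)^d}$ rather than $\frac{|\Sbb^{d-1}|}{2}$. This is a minor bookkeeping point that equally affects the constant as stated in the paper and does not change the structure of the inequality or its exponents.
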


\begin{proof}
The (simple) proof of this corollary is {done by applying our results to the heat semigroup. More details are }provided in the examples (Section \ref{sec:examples}), in particular see Remark \ref{rek:nash}.
\end{proof}

\begin{remark}\label{rem:genera_psi}
The requirement that $\psi_{\Xcal,\Ycal}$ is strictly positive a.e. on $(0,r)$, for some $r>0$ (perhaps very small), is quite natural as we are interested in operators that lack a spectral gap. However, one can easily generalize our result even if that is not the case by defining
\[\Psi_{\Xcal,\Ycal}^{-1}(y)=\sup\br{x\in (0,r)  \;|\; \Psi_{\Xcal,\Ycal}(x) \leq y}.\]
\end{remark}

\subsection{Precise constants}
{Under additional mild assumptions one can improve Theorem \ref{thm:main2} by replacing the inequality \eqref{eq:wpi-implicit}  which contains an arbitrary constant $K$ with an inequality that has an explicit constant.} {The question of how far this constant is from being sharp is the topic of ongoing research.}
\begin{proposition}\label{prop:main2_improved}
{Let the conditions of Assumption A\ref{ass:2} hold. Assume in addition that $\psi_{\Xcal,\Ycal}$ can be extended to a continuous function on $(0,R)$, where $R\in[r,+\infty]$ is such that if $\Psi_{\Xcal,\Ycal}(\rho):=\int_0^\rho\psi_{\Xcal,\Ycal}(\lambda)\,\di\lambda$, $\rho\in(0,R)$ and
\[g_{\Xcal,\Ycal}(\rho):=\Psi_{\Xcal,\Ycal}(\rho)+\rho\psi_{\Xcal,\Ycal}(\rho)\]
then $g$  is non-decreasing and $\lim_{\rho\rightarrow 0^{+}}g_{\Xcal,\Ycal}(\rho)=0$, $\lim_{\rho\rightarrow R^{-}}g_{\Xcal,\Ycal}(\rho)=+\infty$.
Then:}

{a. The following functional inequality holds:
	\begin{equation}\label{eq:wpi-improved}
	 \pa{g_{\Xcal,\Ycal}^{-1}\pa{\frac{\Var(u)}{\|u\|_\Xcal\|u\|_\Ycal}}}^2
	\psi_{\Xcal,\Ycal}\pa{g_{\Xcal,\Ycal}^{-1}\pa{\frac{\Var(u)}{\|u\|_\Xcal\|u\|_\Ycal}}}\|u\|_\Xcal\|u\|_\Ycal
	\leq\mathcal{E}(u),\quad\forall u\in D(\Ecal),
\end{equation}
where $\|u\|_\Xcal=+\infty$ if $u\notin \Xcal$ and similarly for $\Ycal$.}

{b. If $\Xcal=\Ycal$, and $\psi_{\Xcal,\Ycal}(\lambda)=C_1\lambda^\alpha$, $\alpha>-1$ then the estimate \eqref{eq:wpi-improved} reduces to the $(\Phi,p)$-WPI as in Definition \ref{def:wpi} with $\Phi(u)=\|u\|_\Xcal^2$, $p=\frac{\alpha+2}{\alpha+1}$ and $C=C_1^{\frac{1}{2+\alpha}}\frac{2+\alpha}{1+\alpha}$.}
\end{proposition}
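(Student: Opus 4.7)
The plan is to follow the strategy underlying Theorem \ref{thm:main2} — split the variance through the spectral resolution of $H$ at a threshold $\rho$, bound each piece separately, and then pick $\rho$ well — but now with a concrete optimal choice that replaces the free parameter $K$ in \eqref{eq:wpi-implicit} by an explicit expression.

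First, I would write, via the spectral theorem,
\[\Var(u)=\int_{(0,\infty)}d(E(\lambda)u,u)_\Hcal,\qquad \Ecal(u)=\int_{(0,\infty)}\lambda\,d(E(\lambda)u,u)_\Hcal,\]
and, for arbitrary $\rho\in(0,R)$, split the first integral as $\int_{(0,\rho]}+\int_{(\rho,\infty)}$. The density-of-states bound \eqref{eq:dens-states2}, applied with $u=v$ and integrated over $(0,\rho]$ using the continuous extension of $\psi_{\Xcal,\Ycal}$ to $(0,R)$, controls the low-frequency part by $\Psi_{\Xcal,\Ycal}(\rho)\|u\|_\Xcal\|u\|_\Ycal$, while the high-frequency part is bounded via Chebyshev's inequality by $\rho^{-1}\Ecal(u)$. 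Together these yield the auxiliary estimate
\[\Var(u)\leq\Psi_{\Xcal,\Ycal}(\rho)\|u\|_\Xcal\|u\|_\Ycal+\frac{1}{\rho}\Ecal(u),\qquad\rho\in(0,R).\]

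The crucial step is to choose $\rho$ so as to convert this into \eqref{eq:wpi-improved}. Setting $V:=\Var(u)/(\|u\|_\Xcal\|u\|_\Ycal)$, I would pick $\rho_\ast:=g_{\Xcal,\Ycal}^{-1}(V)$, which is well-defined for any $V>0$ thanks to the monotonicity of $g_{\Xcal,\Ycal}$ and the boundary limits $g_{\Xcal,\Ycal}(0^+)=0$, $g_{\Xcal,\Ycal}(R^-)=+\infty$. From the very definition $g_{\Xcal,\Ycal}(\rho_\ast)=V$ one has
\[\Psi_{\Xcal,\Ycal}(\rho_\ast)\|u\|_\Xcal\|u\|_\Ycal=\Var(u)-\rho_\ast\psi_{\Xcal,\Ycal}(\rho_\ast)\|u\|_\Xcal\|u\|_\Ycal.\]
Substituting this identity into the auxiliary estimate at $\rho=\rho_\ast$ and cancelling $\Var(u)$ immediately gives $\rho_\ast^2\psi_{\Xcal,\Ycal}(\rho_\ast)\|u\|_\Xcal\|u\|_\Ycal\leq\Ecal(u)$, which is precisely \eqref{eq:wpi-improved}. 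Note that $\rho_\ast$ is in fact the stationary point in $\rho$ of the right-hand side of the auxiliary estimate, so this choice extracts the sharpest inequality attainable from that bound, which is what makes the constant in (b) explicit rather than parametrized.

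Part (b) is then a direct computation: with $\psi_{\Xcal,\Ycal}(\lambda)=C_1\lambda^\alpha$ one has $\Psi_{\Xcal,\Ycal}(\rho)=C_1\rho^{\alpha+1}/(\alpha+1)$ and $g_{\Xcal,\Ycal}(\rho)=C_1\rho^{\alpha+1}(\alpha+2)/(\alpha+1)$, whence $g_{\Xcal,\Ycal}^{-1}(V)=\bigl(V(\alpha+1)/(C_1(\alpha+2))\bigr)^{1/(\alpha+1)}$; substituting into \eqref{eq:wpi-improved} and solving for $\Var(u)$ produces the WPI with the claimed constant $C_1^{1/(2+\alpha)}(2+\alpha)/(1+\alpha)$ after taking an $(\alpha+1)/(\alpha+2)$-th root. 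The only subtlety I anticipate is ensuring that the density-of-states bound used in the low-frequency step is valid up to $\rho_\ast$, which is guaranteed by the hypotheses that $\psi_{\Xcal,\Ycal}$ extends continuously to $(0,R)$ and that $g_{\Xcal,\Ycal}$ hits every positive value on that interval.
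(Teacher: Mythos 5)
Your proof is correct and takes essentially the same approach as the paper: both rest on the inequality $\Var(u)\leq\Psi_{\Xcal,\Ycal}(\rho)\|u\|_\Xcal\|u\|_\Ycal+\rho^{-1}\Ecal(u)$ (equivalently $\Ecal(u)\geq\rho\bigl(\Var(u)-\Psi_{\Xcal,\Ycal}(\rho)\|u\|_\Xcal\|u\|_\Ycal\bigr)$, which the paper cites from the proof of Theorem \ref{thm:main2}) followed by the choice $\rho_*=g_{\Xcal,\Ycal}^{-1}\bigl(\Var(u)/(\|u\|_\Xcal\|u\|_\Ycal)\bigr)$, with the paper deriving $\rho_*$ by explicitly maximizing $h(\rho)=\rho\bigl(\Var(u)-\Psi_{\Xcal,\Ycal}(\rho)\|u\|_\Xcal\|u\|_\Ycal\bigr)$ while you substitute the identity $g_{\Xcal,\Ycal}(\rho_*)=V$ directly. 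One small slip in your interpretive remark: $\rho_*$ is the critical point of $h$, not of the right-hand side $\Psi_{\Xcal,\Ycal}(\rho)\|u\|_\Xcal\|u\|_\Ycal+\rho^{-1}\Ecal(u)$ of your auxiliary estimate (the latter's critical point would be characterized by $\rho^2\psi_{\Xcal,\Ycal}(\rho)\|u\|_\Xcal\|u\|_\Ycal=\Ecal(u)$, which is the conclusion rather than the choice); this does not affect the validity of the argument.
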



\noindent\textbf{Organization of the paper.} Before proceeding to prove our theorems we first discuss both the classical and the weak Poincar\'e inequalities, and their connection to Markov semigroups in Section \ref{sec:poincare}. The proofs will follow in Section \ref{sec:proof} and we  then present various applications of these theorems in Section \ref{sec:examples}, where we shall also prove Corollary \ref{cor:nash}.

\section{Poincar\'e inequalities}\label{sec:poincare}
In this section we recall the famous Poincar\'e inequality, its connection to Markov semigroups, and we  discuss its ``weak'' variant, the so-called ``weak Poincar\'e inequality.'' \subsection{The classical Poincar\'e inequality}\label{sec:classical-poincare}
When $M$ is a compact Riemannian manifold or a bounded domain of $\R^d$, the classical  $L^2$ Poincar\'e inequality  reads  \cite[\S4.2.1]{Bakry2014}
	\begin{equation}\label{eq:poincare_original}
	\int_{M}\left|\phi(x)-\pa{\frac{1}{\abs{M}}\int_{M}\phi(y)\,\di y}\right|^2 \,\di x \leq C_{M}\int_{M}\abs{\nabla \phi(x)}^2\, \di x,
	\end{equation}
where $\abs{M}$ is the volume of  $M$, and $C_{M}>0$  is independent of $u$.\\

\emph{Motivation: the heat semigroup.} Let us illustrate why the quantities appearing in this inequality are   natural. Let $M\subset\R^d$ be a bounded, connected and smooth domain. Consider the heat semigroup, i.e. solutions  of 
	\[
	\partial_t u(t,x)=\Delta_x u(t,x),\qquad x\in M,\,t\in\R_+,
	\]
subject to Neumann boundary conditions with initial data $u(0,x)=u_0(x)$. The associated invariant measure is $\di\mu(x) = \frac{\di x}{\abs{M}}.$
It is well-known that in this case the spectrum of $\Delta_x$  is discrete and non-positive. In particular, its kernel is separated from the rest of the spectrum. This immediately implies that $u(t,x)=P_t u_0(x)$ converges to the projection onto the kernel, given by
	\[
	P_{\ker} u_0 : =  \int_{M}u_0(x)\,\di \mu(x).
	\]
Thus, we are interested in the  decay rate as $t\to+\infty$ of 
	\begin{equation*}
	\Vcal(P_tu_0):=\norm{P_t u_0-P_{\ker}\pa{P_t u_0}}^2_{L^2\pa{\di\mu}}=\norm{P_t u_0-P_{\ker}{u_0}}^2_{L^2\pa{\di\mu}}.\\
	\end{equation*}

\emph{The entropy method.} A common method to obtain decay rates of this type  is the so-called \emph{entropy method}. Given the  ``relative distance" $\Vcal$ (a Lyapunov functional) we find its \emph{production functional} $\Ecal$ by formally differentiating along the flow of the semigroup:
	\begin{equation}\label{eq:diff_ineq}
	\frac{\di}{\di t} \Vcal(P_tu_0) = 2 \left(\p_tP_t u_0,P_t u_0-P_{\ker}u_0\right)_{L^2(\di\mu)}
	=
	2\int_{M} P_t u_0(x) \Delta_x P_t u_0(x)\, \di \mu(x)
	=
	-2\Ecal(P_t u_0),
	\end{equation}
where $\Ecal$ turns out to be the associated Dirichlet form. Note that since $P_{\ker}=E\pa{\br{0}}$ we can rewrite \eqref{eq:diff_ineq} as $\frac{\di}{\di t} \Var(P_t u_0) = -2\Ecal(P_t u_0)$.
Now we seek a pure functional inequality involving $\Vcal$ and   $\Ecal$. In particular (see, for example, \cite[Chapter 3, \S3.2]{Villani2002}), one looks for a functional inequality of the form
	\begin{equation}\label{eq:func-ineq}
	\Ecal(u) \geq \Theta(\Vcal(u)),\qquad\forall u\in D(\Ecal),
	\end{equation}
with an appropriate nonnegative  function $\Theta$. Succeeding in finding such an inequality entails, in view of \eqref{eq:diff_ineq},
	\[\frac{\di}{\di t }\Vcal(P_t u_0)
	\leq
	-2\Theta(\Vcal(P_t u_0))
	\]
from which an explicit rate  is derived.

Returning to the heat semigroup, we notice that the classical Poincar\'e inequality \eqref{eq:poincare_original} is \emph{exactly} a functional inequality of the form of \eqref{eq:func-ineq}. Moreover, the linear connection between the variance and the Dirichlet form yields an exponential rate of decay for $\Var(P_t u_0)$.

\subsection{Relationship to Markov semigroups}
In view of Subsection \ref{sec:classical-poincare},  there is a natural   extension  of the notion of a Poincar\'e inequality to general Markov semigroups.
Let  $\br{P_t}_{t\geq 0}$  be a Markov semigroup on $\Hcal=L^2(M,\di \mu)$ with a generator $-H$, where $H$ is a self-adjoint, non-negative operator, and $\di \mu$ its invariant measure. Then the Poincar\'e inequality, as already defined (Definition \ref{def:poincare}), is
\begin{equation*}\label{eq:poincare_general_H}
\Var(u) \leq C \Ecal(u),\qquad \forall u\in D(\Ecal).
\end{equation*}

The following  well known theorem (see \cite[Theorem 4.2.5]{Bakry2014}) serves as a motivation for our current investigation:
\begin{theorem}\label{thm:convergence_and_poincare}
The following conditions are equivalent:
\begin{enumerate}
\item \label{item:H_poincare}
$H$ satisfies a Poincar\'e inequality with constant $C$.
\item\label{item:spectral_gap}
The spectrum of $H$ is contained in $\br{0}\cup \left[ \frac{1}{C},\infty \right)$.
\item\label{item:rate_of_convergence}
For every  $u\in L^2\pa{M,\di \mu}$ and every $t\geq0$, 
\[\Var(P_tu)\leq e^{-2t/C}\Var(u).\]
\end{enumerate}
\end{theorem}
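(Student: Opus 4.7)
The plan is to establish the three-way equivalence using the spectral theorem as the unifying tool. The key observation is that all three conditions can be reformulated in terms of the spectral measure $d\nu_u(\lambda) := d(E(\lambda)u,u)_{\mathcal{H}}$ associated to $H$ and a vector $u$. Since $E(\{0\})$ is the projection onto $\ker H$, we have
\[
\Var(u) = \int_{(0,\infty)} d\nu_u(\lambda), \qquad \mathcal{E}(u) = \int_{(0,\infty)} \lambda\, d\nu_u(\lambda),
\]
and, by functional calculus for $P_t = e^{-tH}$,
\[
\|P_t u - E(\{0\})u\|_{\mathcal{H}}^2 = \int_{(0,\infty)} e^{-2t\lambda}\, d\nu_u(\lambda).
\]
Observing that $P_t$ commutes with $E(\{0\})$ (because $E(\{0\})$ is a spectral projection of $H$) and that $E(\{0\})$ is idempotent, the left-hand side above is exactly $\Var(P_tu)$.

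I would then establish (2) $\Leftrightarrow$ (1). If $\sigma(H)\subset\{0\}\cup[1/C,\infty)$, then $d\nu_u$ is supported in $\{0\}\cup[1/C,\infty)$, so $\lambda\geq 1/C$ on the support of the restriction of $d\nu_u$ to $(0,\infty)$, giving $\mathcal{E}(u)\geq C^{-1}\Var(u)$. Conversely, if there exists $\lambda_0\in\sigma(H)\cap(0,1/C)$, then for every small $\epsilon>0$ the spectral projection $E([\lambda_0-\epsilon,\lambda_0+\epsilon])$ is nonzero; applied to any vector in its range $u_\epsilon$, the ratio $\mathcal{E}(u_\epsilon)/\Var(u_\epsilon)$ is at most $\lambda_0+\epsilon<1/C$, contradicting the Poincaré inequality.

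Next, (2) $\Rightarrow$ (3) is immediate from the spectral representation above: if $d\nu_u$ on $(0,\infty)$ is supported in $[1/C,\infty)$, then $e^{-2t\lambda}\leq e^{-2t/C}$ on that support, so
\[
\Var(P_tu) \;=\; \int_{(0,\infty)} e^{-2t\lambda}\, d\nu_u(\lambda) \;\leq\; e^{-2t/C}\,\Var(u).
\]
Finally, for (3) $\Rightarrow$ (1), I would use the entropy-method identity recalled in \eqref{eq:diff_ineq}, namely $\tfrac{d}{dt}\Var(P_tu)=-2\mathcal{E}(P_tu)$, which integrates to $\Var(u)-\Var(P_tu) = 2\int_0^t \mathcal{E}(P_s u)\,ds$. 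Combining with (3), for any $u\in D(\mathcal{E})$,
\[
(1-e^{-2t/C})\Var(u) \;\leq\; \Var(u)-\Var(P_tu) \;=\; 2\int_0^t \mathcal{E}(P_s u)\,ds.
\]
Dividing by $t$ and letting $t\downarrow 0$, using the strong continuity of $P_s$ on $D(\mathcal{E})$ (so $\mathcal{E}(P_s u)\to\mathcal{E}(u)$), yields $C^{-1}\Var(u)\leq\mathcal{E}(u)$.

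The only delicate point is the last limit in (3) $\Rightarrow$ (1): one must justify passing to the limit $t\downarrow 0$ in the time-averaged Dirichlet form, which relies on the standard fact that $P_s$ is a contraction on $D(\mathcal{E})$ with $s\mapsto \mathcal{E}(P_s u)$ continuous at $s=0$ for $u\in D(\mathcal{E})$; for $u\notin D(\mathcal{E})$ both sides of the Poincaré inequality are infinite (in the appropriate sense), so nothing to prove. Everything else is a direct application of the spectral theorem.
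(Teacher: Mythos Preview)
Your argument is correct and follows the standard spectral-theoretic route. Note, however, that the paper does not actually prove this theorem: it is stated as a well-known background result with a reference to \cite[Theorem~4.2.5]{Bakry2014}, so there is no in-paper proof to compare against. One minor quibble: in your closing remark for (3)~$\Rightarrow$~(1), it is not that ``both sides are infinite'' when $u\notin D(\mathcal{E})$---the variance $\Var(u)$ is always finite for $u\in\Hcal$---but rather that $\mathcal{E}(u)=+\infty$ makes the inequality $\Var(u)\leq C\,\mathcal{E}(u)$ hold vacuously.
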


\subsection{The weak Poincar\'e inequality (WPI)}\label{sec:weak}
It is natural to ask whether one can obtain a generalization of Theorem \ref{thm:convergence_and_poincare}  to generators which lack a spectral gap. We note that a differential operator acting on functions defined  in an unbounded domain (generically) lacks a spectral gap. Our Theorems \ref{thm:main} and \ref{thm:main2} provide an answer to this question, where the Poincar\'e inequality is replaced by some form of a weak Poincar\'e inequality. In the following we provide a brief review of the existing literature on variants of the weak Poincar\'e inequality.

This topic has a very rich history, in particular in the second half of the 20th century. As was hinted in Corollary \ref{cor:nash}, a closely related example is Nash's celebrated inequality \cite{Nash1958}:
	\begin{equation*}\label{eq:nash}
	\|u\|_{L^2}^2
	\leq
	C
	\left(\|\nabla u\|_{L^2}^2\right)^{\frac{d}{d+2}}
	\left(\|u\|_{L^1}^2\right)^{\frac{2}{d+2}},\qquad\forall u\in L^1(\R^d)\cap H^1(\R^d)
	\end{equation*}
where $C>0$ does not depend on $u$. Estimates of the same spirit are then developed in \cite{Carlen1987} for example.

The form of the weak Poincar\'e inequality which we consider (Definition \ref{def:wpi}) first appeared in \cite[Equation (2.3)]{Liggett1991}, where  it is also shown how such a differential inequality leads to an algebraic decay rate. These ideas were then further developed in \cite{Bertini1999,Rockner2001,Wang2002,Wang2004a,Aida2004,Wang2004,Wang2009}. We also refer to \cite{Aida1998} where the notion of a ``weak spectral gap'' is introduced.

In fact, in the influential work of R\"ockner and  Wang \cite{Rockner2001} several  variants of the WPI were introduced. The most general one is
	\begin{equation*}
	\Var(u)
	\leq
	\alpha(r)\Ecal(u)+r\Phi(u),\qquad\forall u\in D(\Ecal),\,r>0,
	\end{equation*}
where $\alpha:(0,\infty)\to(0,\infty)$ is decreasing and $\Phi:L^2(\di\mu)\to[0,\infty]$ satisfies $\Phi(cu)=c^2\Phi(u)$ for any $c\in\R$ and $u\in L^2(\di\mu)$. This is equivalent to our $(\Phi,p)$-WPI whenever $\alpha(r)=Cr^{1-p}$.

{Continuing upon the work of R\"ockner and Wang and their notion of WPI, works on connections between these inequalities and isoperimetry or concentration properties of the underlying measures have been extremely prolific in the probability community. We refer the interested reader to \cite{Cancrini2000, Barthe2005, Bobkov2007, Cattiaux2010, Mourrat2011, Hu2019a}.}
For a recent account of the notions discussed here, and in particular the relationship between functional inequalities and Markov semigroups,  we refer to the book  \cite{Bakry2014}.

\section{Proofs of the theorems}\label{sec:proof}
We first prove the more general Theorem \ref{thm:main2}, and  show how Theorem \ref{thm:main} is a straightforward  corollary.   We then  show how to obtain the decay rates  in Theorem \ref{thm:main1}{, and we conclude with the proof of Proposition \ref{prop:main2_improved}}. For brevity, we  omit  the subscripts  from the functions $\psi_{\Xcal,\Ycal}$, $\Psi_{\Xcal,\Ycal}$ and $g_{\Xcal,\Ycal}$.
\subsection{Proof of Theorem \ref{thm:main2}a}
First we show that an estimate on the density of states near $0$ leads to the WPI \eqref{eq:wpi-implicit}. Let $r_0\in(0,r)$ to be chosen later. Let $\{E(\lambda)\}_{\lambda\geq0}$ be the resolution of the identity of $H$. Let $u\in D(\Ecal)\cap\Xcal\cap\Ycal$. Then:
	\begin{align*}
	\Ecal(u)
	&=
	\int_MuHu\,\di\mu
	=
	\int_Mu\int_{[0,\infty)}\lambda\,\di E(\lambda)u\,\di\mu\notag\\
	&\geq
	\int_Mu\int_{[r_0,\infty)}\lambda\,\di E(\lambda)u\,\di\mu
	\geq
	r_0\int_Mu\int_{[r_0,\infty)}\,\di E(\lambda)u\,\di\mu\notag\\
	&=
	r_0\int_Mu\int_{[0,\infty)}\,\di E(\lambda)u\,\di\mu-r_0\norm{E(\{0\})u}^2_\Hcal-r_0\int_Mu\int_{(0,r_0)}\,\di E(\lambda)u\,\di\mu\notag\\
	&=
	r_0\Var(u)-r_0\int_Mu\int_{(0,r_0)}\,\di E(\lambda)u\,\di\mu.\label{eq:2.1}
	\end{align*}
We now use the estimate on the density of states \eqref{eq:dens-states2} to obtain
	\begin{align*}
	\int_M&u\int_{(0,r_0)}\,\di E(\lambda)u\,\di\mu
	=
	\int_{(0,r_0)}\frac{\di}{\di\lambda}\left(E(\lambda)u,u\right)_\Hcal\di\lambda\\\
	&\leq
	\|u\|_\Xcal\|u\|_\Ycal\int_{(0,r_0)}\psi(\lambda)\,\di\lambda
	=
	\|u\|_\Xcal\|u\|_\Ycal\Psi(r_0).
	\end{align*}
Hence we have
	\begin{equation*}
	\Ecal(u)
	\geq
	r_0\left(\Var(u)-\|u\|_\Xcal\|u\|_\Ycal\Psi(r_0)\right).
	\end{equation*}
Let $K\in(0,1)$ and define
	\begin{equation*}
	r_0=\Psi^{-1}\left(K\frac{\Var(u)}{\|u\|_\Xcal\|u\|_\Ycal}\right)\qquad\text{so that}\qquad \Psi(r_0)=K\frac{\Var(u)}{\|u\|_\Xcal\|u\|_\Ycal}
	\end{equation*}
(to satisfy the condition $r_0<r$ we may need $K$ to be small). Then  we get
	\begin{equation*}
	\Ecal(u)
	\geq
	r_0(1-K)\Var(u)
	\end{equation*}
which completes the  proof.

\subsection{Proof of Theorem \ref{thm:main2} b \& c (and Theorem \ref{thm:main})}
The proofs   follow from the following lemma  where we show how \eqref{eq:wpi-implicit} leads to a $(\Phi,p)$-WPI.

\begin{lemma}\label{lem:thm2-to-thm1}
When $\Xcal=\Ycal$ and $\psi(\lambda)=C_1\lambda^\alpha$, $\alpha>-1$, the inequality \eqref{eq:wpi-implicit} reduces to the  $(\Phi,p)$-WPI  with $\Phi(u)=\|u\|_\Xcal^2$ and $p=\frac{\alpha+2}{\alpha+1}$.  Furthermore, if $\Xcal=\Ycal\subset\Hcal$ we recover Theorem \ref{thm:main}.
\end{lemma}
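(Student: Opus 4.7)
The plan is to carry out a direct substitution and rearrangement in the implicit inequality \eqref{eq:wpi-implicit}. Under the hypothesis $\psi(\lambda)=C_1\lambda^\alpha$ with $\alpha>-1$, the antiderivative is explicit: $\Psi(\rho)=\frac{C_1}{\alpha+1}\rho^{\alpha+1}$, so $\Psi^{-1}(y)=\bigl(\frac{(\alpha+1)y}{C_1}\bigr)^{1/(\alpha+1)}$. Using $\Xcal=\Ycal$, the quantity $\|u\|_\Xcal\|u\|_\Ycal$ in \eqref{eq:wpi-implicit} becomes $\|u\|_\Xcal^2$, and plugging $\Psi^{-1}$ into \eqref{eq:wpi-implicit} yields, for any admissible $K\in(0,K_0)$,
\[
(1-K)\left(\frac{(\alpha+1)K}{C_1}\right)^{\!\!1/(\alpha+1)}\!\frac{\Var(u)^{1/(\alpha+1)}}{\|u\|_\Xcal^{2/(\alpha+1)}}\,\Var(u)\;\leq\;\Ecal(u).
\]

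Next I would collect the powers of $\Var(u)$: the left-hand side is proportional to $\Var(u)^{1+1/(\alpha+1)}=\Var(u)^{(\alpha+2)/(\alpha+1)}$. Solving for $\Var(u)$ and raising both sides to the power $(\alpha+1)/(\alpha+2)$ produces
\[
\Var(u)\;\leq\;C(K)\,\Ecal(u)^{(\alpha+1)/(\alpha+2)}\,\bigl(\|u\|_\Xcal^2\bigr)^{1/(\alpha+2)},
\]
with $C(K)$ an explicit constant in $K$, $\alpha$ and $C_1$. Identifying $p=\frac{\alpha+2}{\alpha+1}$ gives $1/p=\frac{\alpha+1}{\alpha+2}$ and the conjugate exponent $q=\alpha+2$, so $1/q=\frac{1}{\alpha+2}$. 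This is exactly the $(\Phi,p)$-WPI of Definition \ref{def:wpi} with $\Phi(u)=\|u\|_\Xcal^2$, establishing part b. Since $K$ is a free parameter in $(0,K_0)$, one may at this stage minimize $C(K)$ over $K$ to get the cleanest constant, although for the statement of the lemma the existence of some admissible constant is all that is required.

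For part c, I would observe that Assumption A\ref{ass:1} is just the specialization of Assumption A\ref{ass:2} obtained by setting $\Ycal=\Xcal\subset\Hcal$ and $\psi_{\Xcal,\Ycal}(\lambda)=C_1\lambda^\alpha$: condition (1) in A\ref{ass:2} reduces to condition (1) in A\ref{ass:1}, and the bound \eqref{eq:dens-states2} reduces to \eqref{eq:dens-states}. Hence the conclusion of part b applies and produces exactly the statement of Theorem \ref{thm:main}, with the same exponent $p=(\alpha+2)/(\alpha+1)$ and with $\Phi(u)=\|u\|_\Xcal^2$ (extended by $+\infty$ outside $\Xcal$).

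There is no real obstacle here; the proof is pure exponent-bookkeeping. The only place to be slightly careful is to ensure that the admissibility constraint $r_0<r$ used in the derivation of \eqref{eq:wpi-implicit} is preserved, which is why $K$ must be kept in $(0,K_0)$ rather than taken arbitrarily in $(0,1)$; however, since the factor $K$ only affects the multiplicative constant $C(K)$ and not the functional form of the inequality, this does not interfere with identifying the resulting estimate as a genuine $(\Phi,p)$-WPI.
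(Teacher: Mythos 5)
Your proof is correct and follows essentially the same route as the paper's: compute $\Psi$ and $\Psi^{-1}$ explicitly for $\psi(\lambda)=C_1\lambda^\alpha$, substitute into \eqref{eq:wpi-implicit}, collect the power $\Var(u)^{(\alpha+2)/(\alpha+1)}$, and rearrange to read off $p=\frac{\alpha+2}{\alpha+1}$. The remark about $K$ only affecting the constant $C(K)$ and not the functional form, and the observation that Assumption A\ref{ass:1} is the specialization of Assumption A\ref{ass:2} with $\Ycal=\Xcal\subset\Hcal$, are both consistent with what the paper does.
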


\begin{proof}
Let $\psi(\lambda)=C_1\lambda^\alpha$, $\alpha>-1$. Then
	\[
	\Psi(\rho)=C_1\int_0^\rho\lambda^{\alpha}\,\di\lambda=\frac{C_1}{\alpha+1}\rho^{\alpha+1}
	\]
so that
	\[
	\Psi^{-1}(\tau)
	=
	\left(\frac{\alpha+1}{C_1}\right)^{\frac{1}{\alpha+1}}\tau^{\frac{1}{\alpha+1}}.
	\]
Hence
	\[
	\Psi^{-1}\left(K\frac{\Var(u)}{\|u\|_\Xcal^2}\right)
	=
	\left(\frac{\alpha+1}{C_1}\right)^{\frac{1}{\alpha+1}}\left(K\frac{\Var(u)}{\|u\|_\Xcal^2}\right)^{\frac{1}{\alpha+1}}.
	\]
Plugging this into \eqref{eq:wpi-implicit} we have
	\begin{align*}
	\Ecal(u)
	&\geq
	(1-K)\Psi^{-1}\left(K\frac{\Var(u)}{\|u\|_\Xcal^2}\right)\Var(u)\\
	&=
	(1-K)\left(\frac{\alpha+1}{C_1}\right)^{\frac{1}{\alpha+1}}\left(K\frac{\Var(u)}{\|u\|_\Xcal^2}\right)^{\frac{1}{\alpha+1}}\Var(u)\\
	&=
	C'\Var(u)^{\frac{\alpha+2}{\alpha+1}}\left(\|u\|_\Xcal^2\right)^{-\frac{1}{\alpha+1}}.
	\end{align*}
This leads to
	\[
	\Var(u)
	\leq
	C''\Ecal(u)^{\frac{\alpha+1}{\alpha+2}}\left(\|u\|_\Xcal^2\right)^{\frac{1}{\alpha+2}}
	\]
which is a $(\Phi,p)$-WPI with $\Phi(u)=\|u\|_\Xcal^2$ and $p=\frac{\alpha+2}{\alpha+1}$.
\end{proof}

\subsection{Proof of Theorem \ref{thm:main1}}
We show that the growth rate assumption \eqref{eq:semigroup-decay-ass}  leads to a  decay of the variance as in \eqref{eq:bound-variance}. {This proof is rather standard and is included for completeness.} Using \eqref{eq:diff_ineq}, the $(\Phi,p)$-WPI and  \eqref{eq:semigroup-decay-ass}, we have:
	\begin{align*}
	\frac{\di}{\di t}\Var(P_tu)
	=
	-2\Ecal(P_tu)
	&\leq
	-2C'\Var(P_tu)^{\frac{\alpha+2}{\alpha+1}}\left(\|P_tu\|_\Xcal^2\right)^{-\frac{1}{\alpha+1}}\\
	&\leq
	-2C'\Var(P_tu)^{\frac{\alpha+2}{\alpha+1}}\left( \|u\|_\Xcal^2+C_2t^\beta\right)^{-\frac{1}{\alpha+1}}
	\end{align*}
where $C'$ is as in the proof of Lemma \ref{lem:thm2-to-thm1}. This is an ordinary differential inequality  for ${y(t):=}\Var(P_tu)$ of the form
	\begin{equation*}
	\dot{y} \leq -A y^{1+a}({B+Ct^{b}})^{-c},
	\end{equation*}
for $a,c,A,B>0$, $b\in\R$, and $C\geq 0$. We readily obtain
	\begin{equation*}\label{eq:diff_inequality_proof_I}
	y(t) \leq {\left(y(0)^{-a} + a A\int_{0}^t {\left(B+Cs^b\right)}^{-c}\,\di s\right)}^{-{1}/{a}}
	\end{equation*}
which yields the  bound  \eqref{eq:bound-variance}. Asymptotically,  we have
	\begin{equation*}
	y(t)= O(t^{-1/a})\text{ as }t\to+\infty,\qquad\text{if } C=0\text{ or }b\leq 0.
	\end{equation*}
Otherwise, it is easy to see that $bc=1$ leads to logarithmic decay, while $bc<1$ leads to polynomial decay. The precise  rates are
	\begin{equation*}
	y(t)=
	\begin{cases}
	O({(\log t) ^{-1/a}})\text{ as }t\to+\infty, & bc=1.\\
	O({t^{-(1-bc)/a}})\text{ as }t\to+\infty, & bc<1.
	\end{cases}
	\end{equation*}
This completes the proof of Theorem \ref{thm:main1}.

\begin{remark}[The constant $C_3$]
It is beneficial to provide a detailed computation of the constant $C_3$ appearing in \eqref{eq:bound-variance}. The following computations are performed up to  a constant $C$ which does not depend on $\alpha, M, \Hcal, \Xcal$ or any other fundamental quantity.

Considering the proof of  Theorem \ref{thm:main1}, we see that $C_3$ is denoted $aA$ where $a=\frac{1}{\alpha+1}$ and $A=2C'$ with $C'=\underbrace{(1-K)K^{\frac{1}{1+\alpha}}}_{\tilde K}\left(\frac{\alpha+1}{C_1}\right)^{\frac{1}{\alpha+1}}$ where $C_1$ and $\alpha$ appear in the bound \eqref{eq:dens-states}. We readily obtain
	\begin{equation*}\label{eq:c3}
	C_3=2\tilde K\frac{1}{\alpha+1}\left(\frac{\alpha+1}{C_1}\right)^{\frac{1}{\alpha+1}}=2\tilde K\left(\alpha+1\right)^{\frac{-\alpha}{\alpha+1}}C_1^{\frac{-1}{\alpha+1}}.
	\end{equation*}
	
{In fact, a short computation using the result of  Proposition \ref{prop:main2_improved} yields the even more explicit formula
\[C_3=2(\alpha+2)^{-\frac{\alpha+2}{\alpha+1}}(\alpha+1)^{\frac{1}{\alpha+1}}C_1^{\frac{-1}{\alpha+1}}.\]}\end{remark}

{\subsection{Proof of Proposition \ref{prop:main2_improved}}
As seen in the Proof of Theorem \ref{thm:main} we have that for all $r_0\in (0,r)$
\begin{equation}\label{eq:to_be_max}
	\Ecal(u)
	\geq
	r_0\left(\Var(u)-\|u\|_\Xcal\|u\|_\Ycal\Psi(r_0)\right).
	\end{equation}
{Our goal is to maximize the right hand side of this inequality.} As such, for any $a,b>0$, consider the function \[h(\rho)=\rho\pa{a - \Psi(\rho)b}.\]By assumption, we can extend $\psi$ to a continuous function on $(0,R)$, so that $h$  is differentiable and we have 
\[h^\prime(\rho)= a-g(\rho) b.\]
As $g$ increases from $0$ to $+\infty$ we see that the unique critical point, $\rho=g^{-1}\pa{\frac ab}$ is a maximum point {of $h$}. Thus
\begin{align*}\max_{\rho\in (0,R)}h(\rho)&=g^{-1}\pa{\frac ab}\pa{a-\Psi\pa{g^{-1}\pa{\frac ab}}b}\\
&=g^{-1}\pa{\frac ab}\pa{a-\rpa{g\pa{g^{-1}\pa{\frac ab}}-g^{-1}\pa{\frac ab}\psi\pa{g^{-1}\pa{\frac ab}}}b}\\
&=g^{-1}\pa{\frac ab}^2\psi\pa{g^{-1}\pa{\frac ab}}b.
\end{align*}}

{Applying this maximization process to the right hand side of \eqref{eq:to_be_max} with $a=\Var(u)$ and $b=\|u\|_\Xcal\|u\|_\Ycal$ yields the desired inequality \eqref{eq:wpi-improved}.}

{To show the second part of the theorem we notice that $\psi(\lambda)$ can be extended to a continuous function on $(0,+\infty)$ with the same formula $C_1\lambda^\alpha$. The expression for $\Psi$ is $\Psi(\rho)=C_1\frac{\rho^{1+\alpha}}{1+\alpha}.$
We note that \[g(\rho)=C_1\frac{2+\alpha}{1+\alpha}\rho^{1+\alpha}\] satisfies the conditions $\lim_{\rho\to0}g(\rho)=0$ and $\lim_{\rho\to+\infty}g(\rho)=+\infty$. Since 
\[g^{-1}(y)=\pa{\frac{1+\alpha}{C_1\pa{2+\alpha}}}^{\frac{1}{1+\alpha}}y^{\frac{1}{1+\alpha}}\]
and
\[g^{-1}(y)^2 \psi\pa{g^{-1}(y)}=C_1\pa{g^{-1}(y)}^{\alpha+2}\]
we obtain the result by substituting  $y=\frac{\Var(u)}{\|u\|_\Xcal^2}$ thus leading to the inequality
\[\Ecal(u) \geq C_1^{-\frac{1}{1+\alpha}}\pa{\frac{1+\alpha}{2+\alpha}}^{\frac{2+\alpha}{1+\alpha}}\pa{\frac{\Var(u)}{\|u\|_\Xcal^2}}^{\frac{2+\alpha}{1+\alpha}}\norm{u}^2_{\Xcal}.\]}

\section{Examples}\label{sec:examples}
Here we consider several notable examples of equations	
	\begin{equation*}
	\left\{\begin{split}
	&\p_tu(t,x)=-H u(t,x), &t\in\R_+,\,x\in\R^d,\\
	&u(0,x)=u_0(x),&x\in\R^d.
	\end{split}\right.
	\end{equation*}
where $H$ is a \textbf{constant coefficient pseudodifferential operator}:
	\begin{equation*}
	H=P(D).
	\end{equation*}
With a slight abuse of notation, we write $H=P(\xi)$, where $\xi\in\R^d$.

\begin{assumption}\label{ass:pdo}
Assume that there exist  ${\gamma_1}>-1$ and $C,{\gamma_2}>0$  so that $P(\xi)$ satisfies the following conditions:
	\begin{enumerate}
	\item
	$P(0)=0$,
	\item
	$C^{-1}|\xi|^{{\gamma_1}+1}\leq P(\xi)\leq C|\xi|^{{\gamma_2}}$, for any $\xi\in\R^d$,
	\item
	$C^{-1}|\xi|^{\gamma_1}\leq|\nabla P(\xi)|$, for any $\xi\in\R^d\setminus\{0\}$,
	\item
	${\Hcal}^{d-1}\left(\{\xi\in\R^d:P(\xi)=\lambda\}\right)\leq C\lambda^{\frac{d-1}{{\gamma_1}+1}}$, for any $\lambda>0$.
	\end{enumerate}
Here ${\Hcal}^{d-1}$ is the $d-1$-dimensional Hausdorff measure. {(We use the same constant $C$ in all inequalities for simplicity, but one could specify different constants)}
\end{assumption}
Then since $P(\xi)$ is  a multiplication operator, one obtains the following simple expression for the spectral measure $E(\lambda)$ of $H$:
	\begin{equation}\label{eq:pdo-bilin}
	\left(E(\lambda) u,v\right)_{L^2}
	=
	\int_{P(\xi)\leq\lambda}\widehat{u}(\xi)\overline{\widehat{v}(\xi)}\,\di\xi.
	\end{equation}
Let $\di\sigma_{\lambda_0}$ denote the uniform Lebesgue measure on the surface $\left\{\xi\in\R^d\,:\,P(\xi)=\lambda_0\right\}$. Then differentiating \eqref{eq:pdo-bilin} and using the coarea formula we obtain:
	\begin{align}
	\left|\frac{\di}{\di\lambda}\Big|_{\lambda=\lambda_0}\left(E(\lambda) u,v\right)_{L^2(\R^d)}\right|\notag
	&=
	\left|\int_{P(\xi)={\lambda_0}}\frac{1}{|\nabla P(\xi)|}\widehat{u}(\xi)\overline{\widehat{v}(\xi)}\,\di\sigma_{{\lambda_0}}\right|\notag\\
	&\leq
	\left|\int_{P(\xi)={\lambda_0}}\frac{1}{|\nabla P(\xi)|}\,\di\sigma_{{\lambda_0}}\right|\|\widehat u\|_{L^\infty(\R^d)}\|\widehat v\|_{L^\infty(\R^d)}\notag\\
	&\leq
	\left|\int_{P(\xi)={\lambda_0}}\frac{1}{|\nabla P(\xi)|}\,\di\sigma_{{\lambda_0}}\right|\|u\|_{L^1(\R^d)}\|v\|_{L^1(\R^d)}\notag\\
	&\leq
	C\int_{P(\xi)={\lambda_0}}|\xi|^{-{\gamma_1}}\,\di\sigma_{{\lambda_0}}\|u\|_{L^1(\R^d)}\|v\|_{L^1(\R^d)}\notag\\
	&\leq
	C\int_{P(\xi)={\lambda_0}}\lambda_0^{-{\gamma_1}/{\gamma_2}}\,\di\sigma_{{\lambda_0}}\|u\|_{L^1(\R^d)}\|v\|_{L^1(\R^d)}\notag\\
	&\leq
	C\lambda_0^{-\frac{{\gamma_1}}{{\gamma_2}}+\frac{d-1}{{\gamma_1}+1}}\|u\|_{L^1(\R^d)}\|v\|_{L^1(\R^d)}.\label{eq:bound-pseudodiff}
	\end{align}
From Theorem \ref{thm:main2}b this bound on the DoS leads to a $(\Phi,p)$-WPI with {$\Xcal=L^1(\R^d)\cap L^2(\R^d)$,} $\Phi(u)=\|u\|_{L^1}^2$ and $p=\frac{2-\frac{\gamma_1}{\gamma_2}+\frac{d-1}{{\gamma_1}+1}}{1-\frac{\gamma_1}{\gamma_2}+\frac{d-1}{{\gamma_1}+1}}=1+\frac{{\gamma_2}({\gamma_1}+1)}{{\gamma_2}{\gamma_1}-{\gamma_1}({\gamma_1}+1)+d{\gamma_2}}$:
	\begin{equation}\label{eq:wpi-pdo}
	\|u\|_{L^2}^2\leq C\Ecal(u)^{\frac{{\gamma_2}{\gamma_1}-{\gamma_1}({\gamma_1}+1)+d{\gamma_2}}{2{\gamma_2}({\gamma_1}+\frac12)-{\gamma_1}({\gamma_1}+1)+d{\gamma_2}}}\|u\|_{L^1}^{2-\frac{2{\gamma_2}{\gamma_1}-2{\gamma_1}({\gamma_1}+1)+2d{\gamma_2}}{2{\gamma_2}({\gamma_1}+\frac12)-{\gamma_1}({\gamma_1}+1)+d{\gamma_2}}}.
	\end{equation}
Moreover, if ${\gamma_2}=1+{\gamma_1}$  the power of $\lambda_0$ in the bound \eqref{eq:bound-pseudodiff} simply becomes $\lambda_0^{\frac{d}{{\gamma_2}}-1}$ and then \eqref{eq:wpi-pdo} simplifies to
	\begin{equation}\label{eq:wpi-pdo2}
	\|u\|_{L^2}^2\leq C\Ecal(u)^{\frac{d}{{\gamma_2}+d}}\|u\|_{L^1}^{\frac{2{\gamma_2}}{{\gamma_2}+d}}.
	\end{equation}

\begin{remark}[Other functional subspaces]
We focus here on solutions lying in $L^1$. However, other natural subspaces to consider are the Hilbert subspaces $L^{2,s}(\R^d)$, defined as
	\begin{equation*}
	L^{2,s}(\R^d):=\left\{u\in L^2(\R^d)\;:\;\|u\|_{L^{2,s}(\R^d)}^2:=\int_{\R^d}|u(x)|^2(1+|x|^2)^{s}\,\di x<\infty\right\}.
	\end{equation*}
These are naturally obtained as follows. In the estimate \eqref{eq:bound-pseudodiff} above, rather than extract $\widehat{u}$ and $\widehat{v}$ in $L^\infty$, one can use the trace lemma to estimate them in $H^s$ with $s>1/2$ {(if the surface is sufficiently regular for the trace lemma to hold)}. Then, one uses the simple observation that the $L^{2,s}$ norm of a function is the same as the $H^s$ norm of its Fourier transform. The main difference is that the power of $\lambda_0$  in the resulting inequality will be different.
\end{remark}

\subsection{The Laplacian.}
For the Laplacian $P(\xi)=|\xi|^2$, the associated equation is the heat equation:
	\begin{equation*}
	\left\{\begin{split}
	&\p_tu(t,x)=\Delta_x u(t,x), &t\in\R_+,\,x\in\R^d,\\
	&u(0,x)=u_0(x),&x\in\R^d.
	\end{split}\right.
	\end{equation*}
Assumption A\ref{ass:pdo} is satisfied with ${\gamma_2}=\gamma_1+1=2$, so that the DoS is estimated by $\lambda_0^{\frac{d}{2}-1}$:
	\begin{equation*}
	\left|\frac{\di}{\di\lambda}\Big|_{\lambda=\lambda_0}\left(E(\lambda) u,v\right)_{L^2}\right|\notag
	\leq
	C\lambda_0^{\frac{d}{2}-1}\|u\|_{L^1(\R^d)}\|v\|_{L^1(\R^d)}.
	\end{equation*}
 Then the WPI \eqref{eq:wpi-pdo2} becomes
	\begin{equation}\label{eq:wpi-pdo3}
	\|u\|_{L^2}^2\leq C\|\nabla u\|_{L^2}^{\frac{2d}{2+d}}\|u\|_{L^1}^{\frac{4}{2+d}}.
	\end{equation}

\begin{remark}[Nash inequality]\label{rek:nash}
This functional inequality is precisely the Nash inequality. This demonstrates how our methodology gives a general framework for many known important inequalities, presented in general form in \eqref{eq:wpi-pdo} and \eqref{eq:wpi-pdo2}.
\end{remark}

{\begin{remark}[The constant in the Nash inequality]
We note that the computation \eqref{eq:bound-pseudodiff} can be performed with precise constants in this case. Then, using Proposition \ref{prop:main2_improved}, we may extract a precise constant in \eqref{eq:wpi-pdo3}. A simple computation yields the constant $C=\pa{\frac{\abs{\Sbb^{d-1}}}{2}}^{\frac{2}{2+d}}\frac{2+d}{d}$. These computations are left to the reader. We note that the optimal constant in the Nash inequality has already been obtained long ago by Carlen and Loss \cite{Carlen1993a}. Improving our constant is the subject of ongoing research.
\end{remark}
}

\textbf{Convergence to equilibrium.}  We can apply Theorems \ref{thm:main2}c and \ref{thm:main1}  with $\alpha=\frac d2-1$ and $\Phi(u)=\|u\|_{L^1}^2$.  Using the fact that the $L^1$ norm of solutions to the heat equation {do not increase}  we have $C_2=0$, where $C_2$ is the constant appearing in \eqref{eq:semigroup-decay-ass}. The bound \eqref{eq:bound-variance} becomes
	\begin{align*}
	\Var(u(t,\cdot))
	&\leq
	\left(\Var(u_0) ^{\frac{-1}{1+\alpha}}+C\int_{0}^t {\norm{u_0}_{L^1}^{\frac{-2}{1+	\alpha}}{\,\di s}}\right)^{-({1+\alpha})}\\
	&=
	\left(\Var(u_0)^{-\frac{2}{d}}+C{\norm{u_0}_{L^1}^{-\frac{4}{d}}}t\right)^{-\frac d2}\\
	&\leq
	C\norm{u_0}_{L^1}^{2}t^{-\frac d2}
	\end{align*}
and we conclude that for every $u_0\in L^1(\R^d)\cap L^2(\R^d)$
	\begin{equation*}
	\|u(t,\cdot)\|_{L^2}^2=\Var(u(t,\cdot))= O(t^{-\frac d2}),\qquad\text{as }t\to+\infty,
	\end{equation*}
which is the optimal rate for the heat equation. This can be extended to any $u_0\in  L^1(\R^d)$ by density.

\subsection{The fractional Laplacian.}
For $P(\xi)=|\xi|^{2p}$ ($p\in(0,1)$) Assumption A\ref{ass:pdo} on $P(\xi)$ is satisfied with  ${\gamma_2}=\gamma_1+1=2p$, so that  the DoS is estimated by $\lambda_0^{\frac{d}{2p}-1}$and \eqref{eq:wpi-pdo2} becomes
	\begin{equation*}\label{eq:wpi-pdo4}
	\|u\|_{L^2}^2\leq C\|(-\Delta)^{\frac p2} u\|_{L^2}^{\frac{2d}{2p+d}}\|u\|_{L^1}^{\frac{4p}{2p+d}}.
	\end{equation*}
	

\begin{remark}
There is no reason not to take values of $p$ greater than $1$. However, the restriction to $p\in(0,1)$ is quite common in the literature, and the result below on time decay only applies to $p\in(0,1)$.
\end{remark}

\textbf{Convergence to equilibrium.}  From \cite{Bonforte2017} we know that $\|u(t,\cdot)\|_{L^1}\leq\|u_0\|_{L^1}$ and as such, much like the previous example, we conclude that
	\begin{equation*}
	\Var(u(t,\cdot))
	\leq
	\left(
	\Var(u_0)^{-\frac{2p}{d}}
	+
	C\norm{u_0}^{-\frac{4p}{d}}_{L^1}t
	\right)^{-\frac{d}{2p}}
	\leq
	C\norm{u_0}_{L^1}^{2}t^{-\frac {d}{2p}}
	\end{equation*}

and hence the asymptotic decay rate is 
	\begin{equation*}
	\|u(t,\cdot)\|_{L^2}^2=\Var(u(t,\cdot))= O(t^{-\frac {d}{2p}}),\qquad\text{as }t\to+\infty.
	\end{equation*}

\subsection{Homogeneous elliptic operators}
{Consider homogenous elliptic operators of the form
	\begin{equation*}
	P(\xi)=\sum_{|\alpha|=m}a_\alpha\xi^\alpha,\qquad m\in\{2,4,6,\dots\},
	\end{equation*}
where $\alpha\in\N^d_0$ is a multi-index with $|\alpha|=\sum_{i=1}^d\alpha_i$ and where all coefficients $a_\alpha\in\R$ are \emph{assumed} to be such that the operator satisfies Assumption A\ref{ass:pdo}. In this case $m=\gamma_1+1=\gamma_2$ and the WPI \eqref{eq:wpi-pdo2} becomes
	\begin{equation*}
	\|u\|_{L^2}^2\leq C\|{P}^{1/2}(D){u}\|_{L^2}^{\frac{2d}{{m}+d}}\|u\|_{L^1}^{\frac{2{m}}{{m}+d}}.
	\end{equation*}}

{Examples of such operators which are not functions of the Laplacian include:
\begin{enumerate}
\item $P(\xi)=\sum_{i=1}^d|\xi_i|^4$,
\item $P(\xi)=\sum_{i=1}^d|\xi_i|^2-\xi_1\xi_2$.
\end{enumerate}
For these examples, the only nontrivial condition to verify is the condition ${\Hcal}^{d-1}\left(\{\xi\in\R^d:P(\xi)=\lambda\}\right)\leq C\lambda^{\frac{d-1}{m}}$.
}
	
{\textbf{Convergence to equilibrium.} In order to prove convergence to an equilibrium state, one has to know how the $L^1$ norm behaves under the flow. The authors are not aware of results in the literature for general operators as the ones we consider here. Based on the known results for the Laplacian and the fractional Laplacian one could ask:
\begin{question}
Is it true that for every homogeneous elliptic operator of order $m$ which satisfies Assumption A\ref{ass:pdo} and which is the generator of a semigroup $(P_t)_{t\geq0}$ there exist $C_2=C_2(u)\geq0$ and $\beta\in\R$ such that for every $t\geq0$, $\norm{P_tu}^2_{L^1}\leq \norm{u}^2_{L^1}+C_2t^\beta$?
\end{question}
If the answer is `yes', from Theorem \ref{thm:main1} this conjecture leads to the following rate of convergence to equilibrium:
	\[
	\Var({P_t u})
	\leq
	\begin{cases} 
	O({(\log t) ^{-\frac dm}}) & \beta=\frac dm.\\
	O({t^{\beta-\frac dm}}) & 0<\beta <\frac dm.\\
	O({t^{-\frac dm}}) & C_2=0 \text{ or }\beta\leq 0.
	\end{cases}
	\]
}




\bibliography{library}

\begin{thebibliography}{10}

\bibitem{Aida1998}
S.~Aida.
\newblock {Uniform Positivity Improving Property, Sobolev Inequalities, and
  Spectral Gaps}.
\newblock {\em Journal of Functional Analysis}, 158:152--185, 1998.

\bibitem{Aida2004}
S.~Aida.
\newblock {Weak Poincar{\'{e}} inequalities on domains defined by Brownian
  rough paths}.
\newblock {\em Annals of Probability}, 32(4):3116--3137, 2004.

\bibitem{Bakry2014}
D.~Bakry, I.~Gentil, and M.~Ledoux.
\newblock {\em {Analysis and Geometry of Markov Diffusion Operators}}, volume
  348 of {\em Grundlehren der mathematischen Wissenschaften}.
\newblock Springer International Publishing, Cham, 2014.

\bibitem{Barthe2005}
F.~Barthe, P.~Cattiaux, and C.~Roberto.
\newblock {Concentration for independent random variables with heavy tails}.
\newblock {\em Applied Mathematics Research eXpress}, 2005:39--60, 2005.

\bibitem{Bertini1999}
L.~Bertini and B.~Zegarlinski.
\newblock {Coercive Inequalities for Gibbs Measures}.
\newblock {\em Journal of Functional Analysis}, 162:257--286, 1999.

\bibitem{Bobkov2007}
S.~G. Bobkov.
\newblock {Large deviations and isoperimetry over convex probability measures
  with heavy tails}.
\newblock {\em Electronic Journal of Probability}, 12:1072--1100, 2007.

\bibitem{Bonforte2017}
M.~Bonforte, Y.~Sire, and J.~L. V{\'{a}}zquez.
\newblock {Optimal existence and uniqueness theory for the fractional heat
  equation}.
\newblock {\em Nonlinear Analysis, Theory, Methods and Applications},
  153:142--168, 2017.

\bibitem{Cancrini2000}
N.~Cancrini and F.~Martinelli.
\newblock {On the spectral gap of Kawasaki dynamics under a mixing condition
  revisited}.
\newblock {\em Journal of Mathematical Physics}, 41(3):1391--1423, 2000.

\bibitem{Carlen1987}
E.~A. Carlen, S.~Kusuoka, and D.~W. Stroock.
\newblock {Upper Bounds for symmetric Markov transition functions}.
\newblock {\em Ann. Inst. Henri Poincar'e Probab. Stat.}, 23(S2):245--287,
  1987.

\bibitem{Carlen1993a}
E.~A. Carlen and M.~Loss.
\newblock {Sharp constant in Nash's inequality}.
\newblock {\em International Mathematics Research Notices}, 1993(7):213--215,
  1993.

\bibitem{Cattiaux2010}
P.~Cattiaux, N.~Gozlan, A.~Guillin, and C.~Roberto.
\newblock {Functional inequalities for heavy tailed distributions and
  application to isoperimetry}.
\newblock {\em Electronic Journal of Probability}, 15:346--385, 2010.

\bibitem{Hu2019a}
S.~Hu and X.~Wang.
\newblock {Subexponential decay in kinetic Fokker-Planck equation: Weak
  hypocoercivity}.
\newblock {\em Bernoulli}, 25(1):174--188, feb 2019.

\bibitem{Liggett1991}
T.~M. Liggett.
\newblock {{\$}L{\_}2{\$} Rates of Convergence for Attractive Reversible
  Nearest Particle Systems: The Critical Case}.
\newblock {\em The Annals of Probability}, 19(3):935--959, jul 1991.

\bibitem{Mourrat2011}
J.~C. Mourrat.
\newblock {Variance decay for functionals of the environment viewed by the
  particle}.
\newblock {\em Annales de l'institut Henri Poincare (B) Probability and
  Statistics}, 47(1):294--327, 2011.

\bibitem{Nash1958}
J.~F.~F. Nash.
\newblock {Continuity of Solutions of Parabolic and Elliptic Equations}.
\newblock {\em American Journal of Mathematics}, 80(4):931, oct 1958.

\bibitem{Rockner2001}
M.~R{\"{o}}ckner and F.-Y. Wang.
\newblock {Weak Poincare Inequalities and L2-Convergence Rates of Markov
  Semigroups}.
\newblock {\em Journal of Functional Analysis}, 185:564--603, 2001.

\bibitem{Villani2002}
C.~Villani.
\newblock {A review of mathematical topics in collisional kinetic theory}.
\newblock In S.~Friedlander and D.~Serre, editors, {\em Handbook of
  mathematical fluid dynamics}, volume~1. Elsevier, 2002.

\bibitem{Wang2002}
F.-Y. Wang.
\newblock {Functional Inequalities and Spectrum Estimates : The Infinite
  Measure Case}.
\newblock {\em Journal of Functional Analysis}, 194:288--310, 2002.

\bibitem{Wang2004a}
F.-Y. Wang.
\newblock {Functional inequalities on abstract Hilbert spaces and
  applications}.
\newblock {\em Mathematische Zeitschrift}, 246(1-2):359--371, 2004.

\bibitem{Wang2004}
F.-Y. Wang.
\newblock {Weak Poincar{\'{e}} inequalities on path spaces}.
\newblock {\em International Mathematics Research Notices}, 2004(2):89--108,
  2004.

\bibitem{Wang2009}
F.-y. Wang.
\newblock {Super and weak Poincar{\'{e}} inequalities for hypoelliptic
  operators}.
\newblock {\em Acta Mathematicae Applicatae Sinica, English Series},
  25(4):617--630, 2009.

\end{thebibliography}
\bibliographystyle{abbrv}
\end{document}